\newcommand{\andSep}{\,\,\,\text{ and }\,\,\,}
\newcommand{\CC}{{\mathbb{C}}}
\newcommand{\ca}{$C^*$-algebra}
\newenvironment{psmallmatrix}
  {\left(\begin{smallmatrix}}
  {\end{smallmatrix}\right)}
\def\today{\number\day\space\ifcase\month\or   January\or February\or
   March\or April\or May\or June\or   July\or August\or September\or
   October\or November\or December\fi\   \number\year}
\newtheorem{lma}{Lemma}[section]
\newaliascnt{thmCt}{lma}
\newtheorem{thm}[thmCt]{Theorem}
\newaliascnt{corCt}{lma}
\newtheorem{cor}[corCt]{Corollary}
\newaliascnt{prpCt}{lma}
\newtheorem{prp}[prpCt]{Proposition}
\theoremstyle{definition}
\newaliascnt{dfnCt}{lma}
\newaliascnt{rmkCt}{lma}
\newtheorem{rmk}[rmkCt]{Remark}
\newaliascnt{exaCt}{lma}
\newaliascnt{qstCt}{lma}
\newtheorem{qst}[qstCt]{Question}
\newaliascnt{pgrCt}{lma}
\newtheorem{pgr}[pgrCt]{}
\newcounter{theoremintro}
\newaliascnt{thmIntroCt}{theoremintro}
\newtheorem{thmIntro}[thmIntroCt]{Theorem}
\newaliascnt{corIntroCt}{theoremintro}
\newaliascnt{qstIntroCt}{theoremintro}
\title{Prime ideals in C*-algebras and applications to Lie theory}
\author[Eusebio Gardella]{Eusebio Gardella}
\address{Eusebio Gardella
Department of Mathematical Sciences, Chalmers University of
Technology and University of Gothenburg, Gothenburg SE-412 96, Sweden.}
\email{gardella@chalmers.se}
\urladdr{www.math.chalmers.se/~gardella}
\author{Hannes Thiel}
\address{Hannes~Thiel, 
Department of Mathematical Sciences, Chalmers University of Technology and University of
Gothenburg, Gothenburg SE-412 96, Sweden.}
\email{hannes.thiel@chalmers.se}
\urladdr{www.hannesthiel.org}
\thanks{
The first named author was partially supported by the Swedish Research Council Grant 2021-04561.
The second named author was partially supported by the Knut and Alice Wallenberg Foundation (KAW 2021.0140).
}
\subjclass[2010]%
{Primary
16N60, 
46L05. 
Secondary
16W10, 
47B47. 
}
\keywords{prime ideals, $C^*$-algebras, Lie ideals, commutators, square-zero elements}
\date{\today}
\begin{document}

\begin{abstract}
We show that every proper, dense ideal in a \ca{} is contained in a prime ideal.
It follows that a subset generates a \ca{} as a not necessarily closed ideal if and only if it is not contained in any prime ideal.

This allows us to transfer Lie theory results from prime rings to \ca{s}.
For example, if a \ca{} $A$ is generated by its commutator subspace $[A,A]$ as a ring, then $[[A,A],[A,A]] = [A,A]$.
Further, given Lie ideals $K$ and $L$ in $A$, then $[K,L]$ generates $A$ as a not necessarily closed ideal if and only if $[K,K]$ and $[L,L]$ do, and moreover this implies that $[K,L]=[A,A]$.

We also discover new properties of the subspace generated by square-zero elements and relate it to the commutator subspace of a \ca.
\end{abstract}

\maketitle

\section{Introduction}

An ideal $I$ in a ring $R$ is said to be \emph{prime} if $I \neq R$ and if whenever $J,K \subseteq R$ are ideals satisfying $JK \subseteq I$ then $J \subseteq I$ or $K \subseteq I$.
Further, an ideal is \emph{semiprime} if $I=R$ or if $I$ is the intersection of prime ideals.
A fundamental result in \ca{s} is that every \emph{closed} ideal is semiprime.
Indeed, every proper closed ideal is the intersection of closed prime ideals.
In particular, a proper closed ideal is contained in a closed prime ideal, and the main result of this paper is a strengthening of this result to ideals that are not necessarily closed.
The following is \autoref{prp:SemiprimeClosure}:

\begin{thmIntro}
\label{ThmA}
Every proper, not necessarily closed, ideal in a \ca{} is contained in a prime ideal.
\end{thmIntro}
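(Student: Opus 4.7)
The plan is to adapt the classical Krull argument for the existence of prime ideals to the non-unital setting, by excluding the powers of a carefully chosen element.

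The main preliminary step is to produce an element $a \in A$ with $a^n \notin I$ for every $n \geq 1$; equivalently, to show that the quotient ring $A/I$ is not nil. This is where the C*-algebraic structure should enter in an essential way: if every positive element of $A$ had some power in $I$, one should be able, using $\|a^n\| = \|a\|^n$ for $a \in A_+$ together with continuous functional calculus (to extract roots $a^{1/k}$ or to cut off small parts of the spectrum), to iteratively lower the nilpotency index and eventually force $I = A$, contradicting properness. I expect most of the technical difficulty to sit here.

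Granted such an $a$, I would apply Zorn's lemma to the family
\[
\mathcal{F}_a = \{J : J \text{ is an ideal of } A,\, I \subseteq J, \text{ and } a^n \notin J \text{ for all } n \geq 1\}.
\]
This family is non-empty (it contains $I$) and closed under unions of chains, since if $a^n$ belonged to the union of a chain, then by directedness it would already belong to a single member. Zorn's lemma thus produces a maximal element $P \in \mathcal{F}_a$.

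Finally, I would verify that $P$ is prime. Suppose $J, K$ are ideals of $A$ with $JK \subseteq P$ but $J, K \not\subseteq P$. By maximality of $P$, neither $J+P$ nor $K+P$ lies in $\mathcal{F}_a$, so there exist $n, m \geq 1$ with $a^n \in J+P$ and $a^m \in K+P$. Multiplying,
\[
a^{n+m} \in (J+P)(K+P) \subseteq JK + JP + PK + P^2 \subseteq P,
\]
contradicting $a^{n+m} \notin P$. Hence $P$ is a prime ideal of $A$ containing $I$. The whole argument thus reduces to the preliminary step, which is where all the C*-algebraic content lies.
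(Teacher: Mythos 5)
Your reduction is sound as far as it goes: the Krull--Zorn argument in your last two steps is correct, and it is essentially the same mechanism the paper uses (the paper outsources it to the $m$-system characterization of the semiprime closure in Lam's book, which packages exactly your argument). The gap is that your ``preliminary step'' --- producing $a\in A$ with $a^n\notin I$ for all $n$, equivalently showing that $A/I$ is not nil --- is the entire content of the theorem, and you have not proved it: it is precisely \autoref{prp:NotRadicalExtension} of the paper. Moreover, the strategies you gesture at do not work as stated. Extracting roots fails because if you pass to $b=a^{1/N}$, the hypothesis only gives you \emph{some} power $b^m\in I$ with $m$ uncontrolled (possibly $m\gg N$), so you cannot force the exponent on $a$ below $1$. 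Cutting off small parts of the spectrum shows only that $(a-\varepsilon)_+\in I$ for every $a\in A_+$ and $\varepsilon>0$, hence that $I$ contains the Pedersen ideal and in particular is dense; this settles the unital case but gains nothing in general, since the theorem's real content is exactly the case of proper \emph{dense} ideals in nonunital algebras (non-dense ideals are already handled by the classical closed-ideal theory). There is also a hidden subtlety you would need to address: for a non-closed ideal $I$, the implication ``$0\le a\le b$ and $b\in I$ imply $a\in I$'' is false in general; one needs a strict gap in the exponents, $a\le b^{1+\varepsilon}$, so that polar decomposition yields $a=ubu^*$ with $u\in A$ (\autoref{prp:DominationIdeal}).

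The paper closes this gap with a single trick that your sketch is missing. For a positive contraction $a$ it sets $f(t)=\sum_{n\ge 1}2^{-n}t^{1/n}$, so that $f(a)$ simultaneously dominates a fixed multiple of \emph{every} root $a^{1/n}$; then whichever power $f(a)^n$ happens to land in $I$, one gets $a\le\big(2^{2n^2}f(a)^n\big)^2$ with the right-hand side in $I$, and the domination lemma gives $a\in I$, forcing $I=A$. You would need this (or an equally quantitative) argument to complete your proof; once it is in place, your Zorn step finishes the job just as well as the paper's citation of Lam does.
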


If an ideal is not dense, then its closure is proper and therefore even contained in a closed prime ideal.
Thus, \autoref{ThmA} is most interesting for proper, \emph{dense} ideals.

We can view \autoref{ThmA} as a generalization of the basic result that every proper ideal in a \emph{unital} \ca{} is contained in a maximal ideal. 
(Maximal ideals in unital \ca{s} are prime.)
Recently, Lee \cite{Lee22HigherCommutators, Lee22AddSubgpGenNCPoly} initiated the study of Lie ideals in rings where every proper ideal is contained in a maximal ideal. 
We expect that similar results can be shown for rings where every proper ideal is contained in a prime ideal -- and we demonstrate this in \autoref{sec:Lie} for the case of \ca{s}.

\medskip

Herstein's characterization of Lie ideals in simple rings \cite{Her55LieJordanSimpleRing} was the starting point for the development of a beautiful theory of Lie ideals in prime and semiprime rings
\cite{Bax65CommutatorSubgroupRing, Her69TopicsRngThy, Her70LieStructure, LanMon72LieStrPrimeChar2}.
Using that \ca{s} are semiprime rings, some of the Lie theory applies directly in this setting.
For example, if $A$ is a unital, simple \ca{}, then a subspace $L \subseteq A$ is a Lie ideal (see \autoref{pgr:LieThy} for definitions) if and only if $L=\{0\}$, $L=\CC 1$ or $[A,A] \subseteq L$.
By Pop's theorem \cite{Pop02FiniteSumsCommutators}, $A=[A,A]$ if and only if $A$ admits no tracial states, and so in a unital, simple \ca{} $A$ without tracial states, the only Lie ideals are $\{0\}$, $\CC 1$ and $A$.
Further, if $A$ has a unique tracial state $\tau$, then $\ker(\tau) = \overline{[A,A]}$ and $A/\overline{[A,A]} \cong \CC$ by work of Cuntz-Pedersen \cite{CunPed79EquivTraces}, and thus the only \emph{closed} Lie ideals in $A$ are $\{0\}$, $\CC 1$, $\overline{[A,A]}$ and $A$;
see \cite[Theorem~2.5]{MarMur98UniInvSpaceCAlg}.

More generally, if $L$ is a Lie ideal in a \ca{} $A$  then either $L$ is contained in the center $Z(A)$, or there exists a nonzero (not necessarily closed) ideal $I \subseteq A$ such that $[A,I] \subseteq L$;
see \cite{Her70LieStructure}.
If $I$ is semiprime, then one can pass to the quotient~$A/I$ and continue the analysis there.

Using that \emph{closed} ideals in \ca{s} are semiprime, a comprehensive theory for \emph{closed} Lie ideals in \ca{s} was developed by Miers \cite{Mie81ClosedLie}, Bre\v{s}ar-Kissin-Shulman \cite{BreKisShu08LieIdeals} and Robert \cite{Rob16LieIdeals}.
Based on \autoref{ThmA}, we are able to transfer some results from the Lie theory of prime rings to the study of not necessarily closed Lie ideals in \ca{s}.
The following is \autoref{prp:Lie}:

\begin{thmIntro}
\label{ThmB}
Given a Lie ideal $L$ in a \ca{} $A$, the Lie ideal $[A,L]$ generates~$A$ as a not necessarily closed ideal if and only if $[L,L]$, or equivalently $[[L,L],[L,L]]$, does.
Moreover, if this is the case, then $[A,A] = [L,L] \subseteq L$.
\end{thmIntro}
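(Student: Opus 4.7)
The plan is to leverage \autoref{ThmA} to reduce both parts of the statement to questions in prime quotients, where the classical Lie theory of prime rings is available. Recall that by \autoref{ThmA}, a subset $S \subseteq A$ generates $A$ as a not necessarily closed ideal if and only if $S$ is not contained in any prime ideal of $A$. Moreover, since \ca{s} are semiprime, the intersection of all prime ideals equals $\{0\}$, so a subspace inclusion $V \subseteq W$ in $A$ may be verified by checking $V \subseteq W + P$ for every prime ideal $P$, i.e., in each prime quotient $A/P$.

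For the equivalence, I would first observe the inclusions $[[L,L],[L,L]] \subseteq [L,L] \subseteq [A,L]$ (the second using that $L$ is a Lie ideal), which take care of two of the three implications. For the remaining one, I would assume that $[[L,L],[L,L]] \subseteq P$ for some prime ideal $P$ and let $\bar L$ denote the image of $L$ in $A/P$. Then $[\bar L,\bar L]$ is a Lie ideal of the prime ring $A/P$ satisfying $[[\bar L,\bar L],[\bar L,\bar L]] = 0$. I would then invoke Herstein's classical theorem stating that a Lie ideal $U$ of a prime ring of characteristic $\neq 2$ with $[U,U]$ central must itself be central; applying this first to $U = [\bar L,\bar L]$ and then to $U = \bar L$ forces $\bar L \subseteq Z(A/P)$, whence $[A,L] \subseteq P$. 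Contrapositively, if $[A,L]$ generates $A$ as an ideal, then so does $[[L,L],[L,L]]$.

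For the moreover part, $[L,L] \subseteq L$ is immediate from the Lie ideal property and $[L,L] \subseteq [A,A]$ is trivial, so the remaining content is $[A,A] \subseteq [L,L]$. By the reduction above, it suffices to show $[\bar A,\bar A] \subseteq [\bar L,\bar L]$ in each prime quotient $A/P$. Under the hypothesis, $[\bar L,\bar L]$ generates $A/P$ as an ideal, so $\bar L$ is noncentral in $A/P$ unless $A/P$ is commutative (in which case the inclusion is automatic). The key ring-theoretic ingredient I would invoke is the classical fact that a noncentral Lie ideal $U$ of a prime ring of characteristic $\neq 2$ contains $[I, R]$ for some nonzero ideal $I$; together with commutator identities and the hypothesis that $[\bar L,\bar L]$ generates $A/P$ as an ideal, this should yield $[\bar A,\bar A] \subseteq [\bar L,\bar L]$.

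The main obstacle will be precisely this last step. The prime-ideal framework provided by \autoref{ThmA} converts the original C*-algebraic question into one purely about prime rings, so the difficulty lies in assembling the right classical results from Herstein's theory rather than in developing any new analytic machinery.
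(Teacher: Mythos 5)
Your argument for the equivalence is essentially the paper's: assume $[[L,L],[L,L]]$ lies in a prime ideal $P$ (which exists by \autoref{ThmA} if it fails to generate $A$), pass to the prime quotient, and apply the Lanski--Montgomery/Herstein result twice --- first to the Lie ideal $[\bar L,\bar L]$, then to $\bar L$ --- to conclude $\bar L$ is central and hence $[A,L] \subseteq P$. Combined with the trivial inclusions $[[L,L],[L,L]] \subseteq [L,L] \subseteq [A,L]$, that part is correct and matches the paper.

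The ``moreover'' part has a genuine gap, and it is not merely a matter of assembling the right classical lemmas. Your reduction principle --- that a subspace inclusion $V \subseteq W$ can be verified by checking $V \subseteq W + P$ for every prime ideal $P$ --- is false. Semiprimeness gives $\bigcap_P P = \{0\}$, but sums do not commute with intersections: $\bigcap_P (W+P)$ is in general strictly larger than $W$. Already in $A = \CC \oplus \CC$, with $W$ the diagonal and $P$ either of the two prime ideals, one has $W + P = A$, so the criterion would ``verify'' $A \subseteq W$. Hence you cannot establish $[A,A] \subseteq [L,L]$ prime-by-prime. (Even within a single prime quotient, the classical fact you invoke gives $[I,R] \subseteq U$ for a noncentral Lie ideal $U$, which at best places $[\bar A,\bar A]$ inside $\bar L$, not inside $[\bar L,\bar L]$.) The paper's argument here is global and purely algebraic: it uses the identity $[A, A[K,K]A] \subseteq K$, valid for any Lie ideal $K$ (citing \cite[Lemma~2.1]{Lee22AddSubgpGenNCPoly}), so that $A = A[L,L]A$ immediately yields $[A,A] \subseteq L$; it then bootstraps the already-proved equivalence, applied first to the Lie ideal $A$ and then to the Lie ideal $[A,A]$, to obtain $A = A[K,K]A$ for $K = [[A,A],[A,A]]$, whence $[A,A] \subseteq [[A,A],[A,A]] \subseteq [L,L]$ using $[A,A] \subseteq L$. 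Some such identity is needed to close your argument; the prime-quotient framework alone will not do it.
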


As an interesting special case of \autoref{ThmB} we can consider $A$ as a Lie ideal in itself, and we deduce that $[A,A]=[[A,A],[A,A]]$ whenever $A$ is generated by~$[A,A]$ as a not necessarily closed ideal;
see \autoref{prp:HigherCommutator}.

Generalizing \autoref{ThmB} to the case of two Lie ideals, we have \autoref{prp:LieTwo}:

\begin{thmIntro}
Given Lie ideals $K$ and $L$ in a \ca{} $A$, the Lie ideal $[K,L]$ generates $A$ as a not necessarily closed ideal if and only if $[K,K]$ and $[L,L]$ do.
Moreover, if this is the case, then $[A,A] = [K,L]$.
\end{thmIntro}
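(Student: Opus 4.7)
The plan is to use \autoref{ThmA} to pass to prime quotients of $A$ and to combine \autoref{ThmB} with classical Lie theory in prime rings. For the forward implication, note that $[K,L]\subseteq [A,K]$ (since $L\subseteq A$ and $[K,A]$ equals $[A,K]$ as a subspace), so if $[K,L]$ generates $A$ as an ideal then so does $[A,K]$; applying \autoref{ThmB} to $K$ yields that $[K,K]$ generates $A$, and exchanging the roles of $K$ and $L$ gives the statement for $[L,L]$.

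For the reverse implication, I would assume that $[K,K]$ and $[L,L]$ each generate $A$ as an ideal, and argue by contradiction. If the ideal generated by $[K,L]$ is proper, \autoref{ThmA} supplies a prime ideal $P\subseteq A$ containing it. Passing to the prime quotient $A/P$ (of characteristic zero, as it is a $\CC$-algebra quotient), the images $\bar K, \bar L$ of $K$ and $L$ become Lie ideals with $[\bar K,\bar L]=0$. The main technical input is the classical Lie-theoretic result for prime rings (essentially due to Herstein) that two commuting Lie ideals in a prime ring of characteristic $\neq 2$ cannot both be noncentral; a self-contained proof can be obtained by showing via the Jacobi identity that the centralizer of a Lie ideal is itself a Lie ideal, and then invoking primeness together with the fact that an element centralizing a nonzero two-sided ideal of a prime ring is central. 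Applying this dichotomy, and after possibly interchanging $K$ and $L$, we have $\bar L\subseteq Z(A/P)$, so $[A,L]\subseteq P$, and in particular $[L,L]\subseteq P$, contradicting the hypothesis that $[L,L]$ generates $A$.

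For the moreover clause, \autoref{ThmB} applied separately to $K$ and to $L$ gives $[A,A]=[K,K]=[L,L]$, so both $K$ and $L$ contain $[A,A]$, and hence $[K,L]\supseteq [[A,A],[A,A]]$. Since $[A,A]$ contains $[K,K]$ it generates $A$ as an ideal, so the corollary of \autoref{ThmB} recorded in the introduction (namely $[A,A]=[[A,A],[A,A]]$ whenever $A$ is generated by $[A,A]$ as an ideal) gives $[K,L]\supseteq [A,A]$; the reverse inclusion is trivial. The one substantial obstacle throughout is the Herstein-type dichotomy used in the reverse implication; the remaining steps are routine ideal-theoretic bookkeeping together with direct applications of \autoref{ThmA} and \autoref{ThmB}.
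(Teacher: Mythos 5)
Your argument is correct, and the forward implication and the ``moreover'' clause follow essentially the same route as the paper: reduce $[K,L]$ to $[A,K]$ and $[A,L]$ and invoke \autoref{ThmB}, then use $[A,A]\subseteq K$, $[A,A]\subseteq L$ and $[A,A]=[[A,A],[A,A]]$ (the paper's \autoref{prp:HigherCommutator}) to get $[A,A]=[[A,A],[A,A]]\subseteq[K,L]\subseteq[A,A]$. Where you genuinely diverge is the reverse implication: you pass to a prime quotient via \autoref{ThmA} and invoke the classical dichotomy that two commuting Lie ideals in a prime ring of characteristic $\neq 2$ cannot both be noncentral, whereas the paper never needs this --- it observes that the chain of inclusions above already forces $[K,L]=[A,A]$, which generates $A$ since it contains $[K,K]$, so the reverse implication falls out of the ``moreover'' computation for free. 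In fact your own last paragraph derives $[K,L]\supseteq[A,A]$ from the hypotheses of the reverse implication, so your prime-quotient detour is redundant. It is also the only place where something is left unproved: your sketch of the dichotomy (centralizer of a Lie ideal is a Lie ideal, then primeness) is missing the bridge from a noncentral Lie ideal to a nonzero two-sided ideal (Herstein's theorem that such a Lie ideal contains $[M,R]$ for a nonzero ideal $M$), and centralizing $[M,R]$ is weaker than centralizing $M$, so an extra lemma is needed there. The result you invoke is classical and true, so nothing is wrong, but the paper's route is shorter, self-contained modulo \autoref{prp:Lie}, and avoids importing additional prime-ring Lie theory beyond what was already used to prove \autoref{prp:Lie}.
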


In \autoref{sec:N2}, we study the connection between the commutator subspace $[A,A]$ and the subspace $N$ generated by the set of square-zero elements in a \ca{}~$A$.
Robert proved that $[A,A] = N$ if $A$ is unital and has no characters;
see \cite[Theorem~4.2]{Rob16LieIdeals}.
We generalized this to the case that $A$ is a zero-product balanced \ca{} (\cite[Theorem~5.3]{GarThi23ZeroProdBalanced}), which includes all \ca{s} whose multiplier algebra has no characters (\cite{GarThi23pre:ZeroProdRingsCAlgs}).

In \cite[Question~2.5]{Rob16LieIdeals} Robert asks if $[A,A] = N$ holds for every \ca.
In general, it is known that every square-zero element is a commutator, and thus $N \subseteq [A,A]$;
see, for example, \cite[Lemma~2.1]{Rob16LieIdeals}.
We substantially sharpen this result.
The following is \autoref{prp:N}:

\begin{thmIntro}
\label{ThmD}
Let $N$ denote the not necessarily closed subspace generated by the square-zero elements in a \ca.
We have $[N,N] = N$.
\end{thmIntro}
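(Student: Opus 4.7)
I will prove the two inclusions $[N,N]\subseteq N$ and $N\subseteq[N,N]$ separately. For $[N,N]\subseteq N$, I use a conjugation trick. Given square-zero $x,y\in A$, form $(1-y)\,x\,(1+y)\in A$ (with $1\pm y$ interpreted in the unitization $\tilde A$). Since $(1+y)(1-y)=1-y^2=1$, we have $((1-y)x(1+y))^2=(1-y)\,x^2\,(1+y)=0$, so this element is itself square-zero and lies in $N$. Expanding gives $(1-y)x(1+y)-x=xy-yx-yxy=[x,y]-yxy$, and since $yxy$ is square-zero (as $(yxy)^2=yx(y^2)xy=0$) it also lies in $N$; hence $[x,y]\in N$. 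Bilinearity extends this to $[N,N]\subseteq N$.

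For $N\subseteq[N,N]$, the key is to analyse the C*-subalgebra $C^*(x)\subseteq A$ generated by a fixed non-zero square-zero element $x$. Standard functional calculus, combined with the relations $x^2=0$ and $(xx^*)(x^*x)=x(x^*)^2x=0$, shows that $C^*(x)$ is *-isomorphic to $M_2(C_0(\Omega))$ for $\Omega=\sigma(x^*x)\setminus\{0\}$, via the *-isomorphism $\phi$ with $\phi^{-1}(x)=t^{1/2}e_{12}$, $\phi^{-1}(xx^*)=t\,e_{11}$, and $\phi^{-1}(x^*x)=t\,e_{22}$.

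Inside $M_2(C_0(\Omega))$ I would verify the identity
\[
t^{1/2}e_{12}\;=\;-\tfrac{1}{2}\bigl[\,t^{1/4}e_{12},\ t^{1/4}(e_{11}-e_{22})\,\bigr],
\]
which reduces to $[e_{12},e_{11}-e_{22}]=-2e_{12}$ in $M_2$. The first bracket factor $t^{1/4}e_{12}$ is square-zero. The second factor $t^{1/4}(e_{11}-e_{22})$ is not square-zero, but lies in $N$ via the decomposition $e_{11}-e_{22}=\tfrac{1}{2}(M_1+M_2)$ with $M_1=\begin{psmallmatrix}1&1\\-1&-1\end{psmallmatrix}$ and $M_2=\begin{psmallmatrix}1&-1\\1&-1\end{psmallmatrix}$ square-zero in $M_2$, so that each $t^{1/4}M_i$ is square-zero in $M_2(C_0(\Omega))$. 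Pulling back through $\phi$ expresses $x$ as an explicit commutator of two elements of $N$.

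The main obstacle is establishing the structural *-isomorphism $C^*(x)\cong M_2(C_0(\Omega))$ from the generator relations. A secondary subtlety is that the $t^{1/4}$-scaling in the commutator is essential when $\Omega$ is non-compact ($0$ not isolated in $\sigma(x^*x)$): the "constant $h$" approach using $e_{11}-e_{22}$ by itself would require $1\in C_0(\Omega)$ and thus fails in that case, whereas the $M_1,M_2$-decomposition scaled by $t^{1/4}\in C_0(\Omega)$ works uniformly over~$\Omega$.
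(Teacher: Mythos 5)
Your argument is correct, and at its core it computes the same commutator identity as the paper, but the scaffolding for the hard inclusion is genuinely different, so a comparison is worthwhile. For $[N,N]\subseteq N$ you use exactly the paper's conjugation trick (the paper writes $[x,y]=(1+x)y(1-x)-y+xyx$; yours is the same identity with the roles of $x$ and $y$ swapped). For $N\subseteq[N,N]$, the paper's key formula $x=\bigl[\tfrac12 v|x|^{1/2},\,|x|^{1/2}-|x^*|^{1/2}\bigr]$ is precisely your identity $t^{1/2}e_{12}=-\tfrac12\bigl[t^{1/4}e_{12},\,t^{1/4}(e_{11}-e_{22})\bigr]$ read through the dictionary $t^{1/4}e_{12}\leftrightarrow v|x|^{1/2}$, $t^{1/4}e_{22}\leftrightarrow|x|^{1/2}$, $t^{1/4}e_{11}\leftrightarrow|x^*|^{1/2}$. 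The difference is that the paper never proves (or needs) the structure theorem $C^*(x)\cong M_2(C_0(\Omega))$, which you correctly flag as the main remaining obstacle in your write-up: it works directly in $A^{**}$ with the polar decomposition $x=v|x|$, using only the standard facts that $v f(|x|)$ and $f(|x|)$ lie in $A$ for $f\in C_0(\sigma(|x|)\setminus\{0\})$ and that the support projections satisfy $pq=0$. This is lighter machinery and shortens the proof considerably; your route would additionally require a (true, but not one-line) fiberwise Stone--Weierstrass argument to identify $C^*(x)$. You could in fact bypass the isomorphism entirely by translating your matrices back into $A$: for instance $t^{1/4}M_1$ corresponds to $|x^*|^{1/2}+v|x|^{1/2}-|x|^{1/2}v^*-|x|^{1/2}$, whose square-zero property can be checked directly from $pq=0$. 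The treatment of the diagonal factor also differs: you write $t^{1/4}(e_{11}-e_{22})$ as $\tfrac12(t^{1/4}M_1+t^{1/4}M_2)$, a sum of two square-zero elements, whereas the paper writes $|x|^{1/2}-|x^*|^{1/2}=\bigl[|x|^{1/4}v^*,\,v|x|^{1/4}\bigr]$, a commutator of square-zero elements; both suffice for $x\in[N,N]$, but the paper's version gives the slightly stronger conclusion $x\in\bigl[N,[N,N]\bigr]$ and feeds directly into its additional claim $N\subseteq N^2$ (which the quoted statement does not require of you). Your $t^{1/4}$-scaling remark is well taken and mirrors exactly why the paper uses $|x|^{1/4}$ rather than a bare partial isometry.
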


It is known that $[A,A]$ and $N$ have the same norm-closure;
see \cite[Proposition~2.2]{AlaExtVilBreSpe16CommutatorsSquareZero}, \cite[Corollary~2.3]{Rob16LieIdeals}.
Using this in combination with \autoref{ThmD}, we also recover the result that $[[A,A],[A,A]]$ and $[A,A]$ have the same closure;
see \autoref{prp:ClosureHigherCommutators}.

\medskip

In forthcoming work \cite{GarKitThi23pre:SemiprimeIdls} with Kitamura we give a characterization of semiprime ideals in \ca{s}, and use it to deduce that every semiprime ideal is self-adjoint. 
In particular, it follows that all prime ideals in \ca{s} are \emph{automatically} self-adjoint. 
(Although not all ideals in \ca{s} are self-adjoint.)

It would also be interesting to explore to what extent the results of this paper can be generalized to the setting of $L^p$-operator algebras;
see \cite{Gar21ModernLp} for an introduction to the subject

With view towards \autoref{rmk:NonSelfadjoint}, one has to restrict to $L^p$-operator algebras that `look like \ca{s}', for example group $L^p$-operator algebras of nondiscrete groups \cite{GarThi15GpAlgLp, GarThi19ReprConvLq, GarThi22IsoConv} or groupoid $L^p$-operator algebras \cite{GarLup17ReprGrpdLp, ChoGarThi19arX:LpRigidity}.

\subsection*{Acknowledgements}

The authors thank Leonel Robert for valuable comments on earlier versions of this paper.

\section{Prime ideals in \texorpdfstring{$C^*$}{C*}-algebras}
\label{sec:PrimeIdeals}

Throughout this paper, by an `ideal' in a \ca{} we mean a not necessarily closed, two-sided ideal.
To avoid confusion, we will nevertheless usually clarify whether a considered ideal in a \ca{} is assumed to be closed or not.

In this section, we prove that \ca{s} are never radical extensions over proper (not necessarily closed) ideals;
see \autoref{prp:NotRadicalExtension}.
It follows that if $I$ is a proper ideal of a \ca{}, then the semiprime closure $\sqrt{I}$ is a proper ideal as well.
Thus, every proper ideal in a \ca{} is contained in a prime ideal;
see \autoref{prp:SemiprimeClosure}.

Note that prime ideals are proper by definition.
For further details on prime and semiprime ideals, we refer to \cite[Section~10]{Lam01FirstCourse2ed}.

\medskip

A ring $R$ is said to be a \emph{radical extension} over a subring $S \subseteq R$ if for every $x \in R$ there exists $n \geq 1$ such that $x^n \in S$.
Given an ideal $I$ of a ring $R$, the \emph{semiprime closure} $\sqrt{I}$ is the intersection of all prime ideals containing $I$, with the convention $\sqrt{I}=R$ if there is no prime ideal containing $I$.

\begin{lma}
\label{prp:DominationIdeal}
Let $A$ be a \ca\ and let $I \subseteq A$ be a (not necessarily closed) ideal.
Let $a \in A_+$ and $b \in I_+$ satisfy $a \leq b^{1+\varepsilon}$ for some $\varepsilon > 0$.
Then $a \in I$.
\end{lma}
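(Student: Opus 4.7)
The plan is to construct an element $y \in A$ satisfying $a = yby^{*}$; once this is achieved, the two-sided ideal property gives $a \in A\cdot b\cdot A \subseteq I$. The formal expression $y = a^{1/2}b^{-1/2}$ is suggested by the computation $(a^{1/2}b^{-1/2})\,b\,(b^{-1/2}a^{1/2}) = a^{1/2}\, p\, a^{1/2} = a$, where $p$ is the support projection of $b$ (and $a$ vanishes off $p$, since $a\leq b^{1+\varepsilon}$ forces $\ker(b)\subseteq\ker(a)$). Of course $b^{-1/2}$ need not exist in $A$, so I would regularize by setting
\[
y_n := a^{1/2}(b + 1/n)^{-1/2},
\]
which lies in $A$ because $(b+1/n)^{-1/2}\in\widetilde{A}$ and $A$ is an ideal in its unitization.

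A functional-calculus sandwich using $a\leq b^{1+\varepsilon}$ and commutativity of functions of $b$ yields
\[
y_n^{*}y_n \;\leq\; (b+1/n)^{-1/2}\,b^{1+\varepsilon}\,(b+1/n)^{-1/2} \;=\; b^{1+\varepsilon}(b+1/n)^{-1},
\]
and the scalar estimate $t^{1+\varepsilon}/(t+1/n)\leq t^{\varepsilon}\leq\|b\|^{\varepsilon}$ on $[0,\|b\|]$ gives the uniform bound $\|y_n\|\leq\|b\|^{\varepsilon/2}$. A direct computation shows
\[
a - y_n b y_n^{*} \;=\; a^{1/2}\cdot\tfrac{1/n}{b+1/n}\cdot a^{1/2},
\]
and the same type of bound yields $\|a - y_n b y_n^{*}\|\leq\|b\|^{\varepsilon}/n\to 0$. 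Hence, if some subsequence of $(y_n)$ converges in $A$ to an element $y$, then by continuity of multiplication $yby^{*} = \lim y_{n_k} b y_{n_k}^{*} = a$, and we are done.

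The main obstacle is producing such a norm-convergent subsequence. Setting $F_{n,m} := (b+1/n)^{-1/2} - (b+1/m)^{-1/2}$ (a positive function of $b$ for $n>m$), the identity
\[
(y_n - y_m)^{*}(y_n - y_m) \;=\; F_{n,m}\, a\, F_{n,m} \;\leq\; F_{n,m}^{2}\, b^{1+\varepsilon}
\]
reduces the Cauchy property to a scalar supremum on $[0,\|b\|]$. The mean value theorem applied to $s\mapsto(t+s)^{-1/2}$ gives $F_{n,m}(t)\leq(1/m-1/n)/[2(t+1/n)^{3/2}]$, and optimizing $F_{n,m}(t)^{2}\,t^{1+\varepsilon}$ in $t$ shows that along the dyadic subsequence $n_k := 2^k$ one has $\|y_{n_{k+1}} - y_{n_k}\| \lesssim 2^{-k\varepsilon/2}$. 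These increments form a summable series precisely because $\varepsilon>0$, so $(y_{n_k})$ is Cauchy and converges to some $y\in A$. This is the only place where the strict inequality $\varepsilon>0$ enters, and the argument (as well as the conclusion) genuinely fails at $\varepsilon=0$, since the mere domination $a\leq b$ does not suffice for $a\in I$.
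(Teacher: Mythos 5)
Your proof is correct, and it arrives at exactly the factorization the paper uses: the paper's entire argument is to invoke the polar decomposition in \ca{s} (in the form of \cite[Proposition~II.3.2.1]{Bla06OpAlgs}, applied to $a^{1/2}$ with $(a^{1/2})^*a^{1/2}=a\le b^{1+\varepsilon}$ and exponent $\tfrac{1}{2(1+\varepsilon)}<\tfrac12$) to produce $u\in A$ with $a=ubu^*$, whence $a\in AIA\subseteq I$. What you have done is reprove that cited factorization from scratch: the regularization $y_n=a^{1/2}(b+1/n)^{-1/2}$, the uniform bound $\|y_n\|\le\|b\|^{\varepsilon/2}$, and the dyadic Cauchy estimate $\|y_{2^{k+1}}-y_{2^k}\|\lesssim 2^{-k\varepsilon/2}$ are essentially the textbook proof of that proposition, and your exponent bookkeeping checks out (the key inequalities $F_{n,m}aF_{n,m}\le F_{n,m}^2b^{1+\varepsilon}$ and $t^{1+\varepsilon}/(t+1/n)\le t^{\varepsilon}$ are both valid since functions of $b$ commute and conjugation preserves order). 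So the approach is the same; the only difference is that you supply a self-contained analytic argument where the paper cites a standard lemma. Your version is longer, but it has the virtue of making visible exactly where the strict gap $\varepsilon>0$ enters, namely in the summability of the increments, and you correctly observe that the conclusion genuinely fails at $\varepsilon=0$.
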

\begin{proof}
Applying the polar decomposition in \ca{s} (see, for example, \cite[Proposition~II.3.2.1]{Bla06OpAlgs}), there exists $u \in A$ such that $a = ubu^*$.
Since $I$ is an ideal, this implies that $a \in I$.
\end{proof}

\begin{prp}
\label{prp:NotRadicalExtension}
Let $A$ be a \ca, and let $I \subseteq A$ be a (not necessarily closed) ideal such that $A$ is a radical extension over $I$.
Then $I = A$.
\end{prp}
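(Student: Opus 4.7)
Since $A$ is the linear span of $A_+$, it suffices to show that every $a \in A_+$ belongs to $I$. Fix such an $a$ and, by hypothesis, pick $n \geq 1$ with $a^n \in I$. The first step of my plan is to establish the intermediate assertion that $(a-\varepsilon)_+ \in I$ for every $\varepsilon > 0$. The elementary scalar inequality $(t-\varepsilon)_+ \leq \varepsilon^{-n} t^{n+1}$ for $t \geq 0$ (verified separately on $t \leq \varepsilon$ and $t \geq \varepsilon$) lifts via continuous functional calculus to $\varepsilon^n (a-\varepsilon)_+ \leq a^{n+1} = (a^n)^{1+1/n}$. Since $a^n \in I_+$, \autoref{prp:DominationIdeal} applied with the exponent $1/n$ then yields $\varepsilon^n (a-\varepsilon)_+ \in I$ and hence $(a-\varepsilon)_+ \in I$.

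The second step is to bridge from this intermediate assertion to $a$ itself. Writing $a = (a-\varepsilon)_+ + (a \wedge \varepsilon)$ reduces the problem to showing that the ``small part'' $a \wedge \varepsilon := a - (a-\varepsilon)_+$, which has norm at most $\varepsilon$, lies in $I$; the radical hypothesis applied to $a \wedge \varepsilon$ gives $(a \wedge \varepsilon)^m \in I$ for some $m$, and iterating the Step 1 argument with $a \wedge \varepsilon$ in place of $a$ produces further elements $((a \wedge \varepsilon)-\delta)_+ \in I$. One natural packaging of the finishing argument is to restrict to the commutative sub-C*-algebra $C^*(a) \cong C_0(\operatorname{sp}(a) \setminus \{0\})$, reducing the question to whether a non-unital commutative C*-algebra can be a radical extension over a proper ideal.

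The main obstacle is precisely this second step. Step 1 on its own only shows $a \in \overline{I}$, and the identical conclusion would hold for any ideal $I$ containing the Pedersen ideal of the hereditary subalgebra $\overline{aAa}$; upgrading from the closure to $I$ itself forces one to invoke the radical hypothesis on all the low-spectrum pieces of $a$ beyond a single application of \autoref{prp:DominationIdeal}. Concretely, the ultimate aim is to produce some rational $r \in (0,1]$ with $a^r \in I$, from which $a = a^{1-r}\cdot a^r \in A \cdot I \subseteq I$ follows immediately; the challenge is that controlling the radical index of $a^{1/k}$ (equivalently, showing that some such index is at most $k$) is what ultimately uses the C*-algebraic structure in an essential way.
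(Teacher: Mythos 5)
Your Step~1 is correct, but the proof is not complete, and you have in fact put your finger on exactly the right obstruction: knowing $(a-\varepsilon)_+ \in I$ for all $\varepsilon>0$ only gives $a \in \overline{I}$, and this conclusion holds for any ideal containing the Pedersen ideal of $\overline{aAa}$ --- such ideals can certainly be proper and dense, so no amount of reshuffling of Step~1 alone can finish the argument. The iteration you sketch in Step~2 does not terminate: each time you peel off $(a-\varepsilon)_+$ you are left with a small remainder whose radical index is again uncontrolled, and an infinite sum of elements of $I$ need only lie in $\overline{I}$. The reduction to $C^*(a) \cong C_0(\operatorname{sp}(a)\setminus\{0\})$ does not help either, since the question for nonunital commutative \ca{s} is not any easier than the general one. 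So there is a genuine gap, and the missing idea is precisely the one you ask for at the end: a way to hedge against the unknown radical index.

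The paper's solution is to apply the radical hypothesis not to $a$ but to $f(a)$, where
\[
f(t) := \sum_{n=1}^{\infty} \frac{1}{2^n}\, t^{\frac{1}{n}}, \qquad t \in [0,1],
\]
a continuous function vanishing at $0$ which dominates \emph{every} fractional power of $t$ up to a constant: $f(t) \geq \tfrac{1}{2^{2n}} t^{\frac{1}{2n}}$ for each $n$. If the hypothesis produces $n \geq 1$ with $f(a)^n \in I$, then $b := 2^{2n^2} f(a)^n \in I_+$ satisfies $t \leq \big(2^{2n^2} f(t)^n\big)^2$ on $[0,1]$, hence $a \leq b^2 = b^{1+1}$, and \autoref{prp:DominationIdeal} gives $a \in I$ in a single step. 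In other words, instead of trying to control the index of $a^{1/k}$ for some fixed $k$ (which, as you observed, is where the difficulty concentrates), one builds a single element whose $n$-th power dominates $a^{1/2}$ for \emph{whichever} $n$ the hypothesis happens to return. If you replace your Step~2 with this device --- and note that your Step~1 then becomes unnecessary --- you recover the paper's proof.
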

\begin{proof}
It suffices to show that $I$ contains every contractive, positive element in $A$.
Fix $a \in A_+$ with $\| a \| \leq 1$,
and let $f \colon [0,1] \to [0,1]$ be defined as
\[
f(t) := \sum_{n = 1}^\infty \frac{1}{2^n} t^{\frac{1}{n}}
\]
for $t \in [0,1]$.
Note that $f$ is a continuous function with $f(0) = 0$.

Applying continuous functional calculus, we consider the positive contraction $f(a) \in A$.
By assumption, there exists $n \geq 1$ such that $f(a)^n \in I$.
Then the element $2^{2n^2}f(a)^{n}$ belongs to $I$ as well.

We have $\tfrac{1}{2^{2n}}t^{\frac{1}{2n}} \leq f(t)$ and therefore $t \leq (2^{2n^2}f(t)^n)^2$ for every $t \in [0,1]$.
This implies that
\[
a \leq \big( 2^{2n^2}f(a)^{n} \big)^2 \in I.
\]
Applying \autoref{prp:DominationIdeal}, we deduce that $a \in I$.
\end{proof}

\begin{rmk}
\label{rmk:NonSelfadjoint}
The conclusion of \autoref{prp:NotRadicalExtension} is false for non-selfadjoint operator algebras: 
indeed, the nilpotent algebra
$\big\{ \begin{psmallmatrix}
0 & \lambda \\ 
0 & 0 
\end{psmallmatrix} \colon \lambda\in\mathbb{C} \big\}$ 
is a (nonzero) radical extension of $\{0\}$.
\end{rmk}

\begin{thm}
\label{prp:SemiprimeClosure}
Let $I \subseteq A$ be a proper (not necessarily closed) ideal in a \ca{}.
Then $\sqrt{I}$ is a proper ideal of $A$ as well.

In particular, every proper ideal of a \ca{} is contained in a prime ideal.
\end{thm}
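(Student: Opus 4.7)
The plan is to argue by contraposition using \autoref{prp:NotRadicalExtension}. Suppose, for contradiction, that $\sqrt{I}=A$; the aim is to deduce that $A$ is a radical extension of $I$, which by that proposition would force $I=A$, contradicting the properness of $I$.

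To convert $\sqrt{I}=A$ into the radical-extension condition, I would invoke the standard $m$-system description of the semiprime closure (see \cite[Section~10]{Lam01FirstCourse2ed}): an element $a$ lies in $\sqrt{I}$ if and only if every $m$-system containing $a$ meets $I$. The key observation is that for any $a \in A$, the set of powers $\{a, a^2, a^3, \ldots\}$ is itself an $m$-system, and this holds even when $A$ is non-unital: given $a^i$ and $a^j$, one may take $r := a \in A$ and obtain $a^i r a^j = a^{i+j+1}$, which again lies in the set. Therefore $a \in \sqrt{I}$ forces $a^n \in I$ for some $n \geq 1$. If $\sqrt{I}=A$, applying this to every $a \in A$ says exactly that $A$ is a radical extension of $I$, and \autoref{prp:NotRadicalExtension} delivers the contradiction $I=A$. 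The ``in particular'' clause then follows immediately: since $\sqrt{I}\neq A$, by the stated convention at least one prime ideal of $A$ must contain $I$.

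I do not anticipate a real obstacle here: the C*-algebraic content has been isolated in \autoref{prp:NotRadicalExtension} via the functional calculus argument on $f(t)=\sum 2^{-n}t^{1/n}$, and what remains is a purely ring-theoretic translation using $m$-systems that works verbatim in the non-unital setting, since the only $m$-system needed is the multiplicatively generated sequence of powers.
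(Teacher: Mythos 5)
Your proposal is correct and follows essentially the same route as the paper: both deduce from the $m$-system characterization of $\sqrt{I}$ that $\sqrt{I}$ is a radical extension over $I$ (via the $m$-system of powers), and then invoke \autoref{prp:NotRadicalExtension} to rule out $\sqrt{I}=A$. Your added remark that the powers form an $m$-system even in the non-unital setting is a useful explicit detail that the paper leaves implicit.
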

\begin{proof}
By \cite[Theorem~10.7]{Lam01FirstCourse2ed}, $\sqrt{I}$ consists of those elements $x \in A$ such that every $m$-system containing~$x$ has nontrivial intersection with $I$, and consequently $\sqrt{I}$ is a radical extension over $I$.
If $\sqrt{I}=A$, then $A$ would be a radical extension over the proper ideal $I$, which is impossible by \autoref{prp:NotRadicalExtension}.
Thus, we have $\sqrt{I} \neq A$.

Since $\sqrt{I}$ is the intersection of prime ideals, it follow that there exists a prime ideal $J \subseteq A$ such that $\sqrt{I} \subseteq J$.
\end{proof}

\begin{cor}
\label{prp:GeneratingSet}
A subset $X$ generates a \ca{} $A$ as a not necessarily closed ideal if and only if $X \nsubseteq I$ for every prime ideal $I \subseteq A$.
\end{cor}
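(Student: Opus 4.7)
The plan is to derive this corollary as a quick consequence of \autoref{prp:SemiprimeClosure}. Let $\langle X \rangle$ denote the (not necessarily closed) two-sided ideal of $A$ generated by $X$; I will prove both implications by contraposition.

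For the forward direction, I would assume that there exists a prime ideal $I \subseteq A$ with $X \subseteq I$. Since $\langle X \rangle$ is the smallest ideal containing $X$, this forces $\langle X \rangle \subseteq I$. Because prime ideals are proper by definition (as noted immediately before the lemma statement), $I \neq A$ and hence $\langle X \rangle \neq A$, so $X$ does not generate $A$ as an ideal.

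For the backward direction, I would assume that $X$ does not generate $A$ as an ideal, i.e.\ $\langle X \rangle \neq A$. Then $\langle X \rangle$ is a proper ideal of $A$, and \autoref{prp:SemiprimeClosure} directly provides a prime ideal $J \subseteq A$ with $\langle X \rangle \subseteq J$. In particular $X \subseteq J$, so $X$ is contained in some prime ideal.

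There is essentially no obstacle here since all of the work is absorbed into \autoref{prp:SemiprimeClosure}; the only point worth being careful about is the convention that ``ideal'' means not necessarily closed two-sided ideal, so that $\langle X \rangle$ is understood in the purely algebraic sense. The statement is then a formal corollary in the same spirit as the classical fact that a subset of a commutative unital ring generates the unit ideal if and only if it is not contained in any maximal ideal.
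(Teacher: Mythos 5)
Your argument is correct and is exactly the intended derivation: the paper states this as an immediate corollary of \autoref{prp:SemiprimeClosure} and omits the proof. Both directions are handled properly, including the observation that prime ideals are proper by definition.
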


\begin{cor}
\label{prp:MaximalPrime}
In a \ca{}, every prime ideal that is maximal among all prime ideals is also maximal among all ideals.
\end{cor}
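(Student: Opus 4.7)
The plan is to argue by contradiction using \autoref{prp:SemiprimeClosure}. Let $P \subseteq A$ be a prime ideal that is maximal among prime ideals, and suppose for contradiction that $P$ is not maximal among all (not necessarily closed) ideals. Then there exists an ideal $J \subseteq A$ with $P \subsetneq J \subsetneq A$.

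Since $J$ is a proper ideal of the \ca{} $A$, \autoref{prp:SemiprimeClosure} provides a prime ideal $Q \subseteq A$ with $J \subseteq Q$. Because prime ideals are proper by definition, $Q \neq A$. Combining the inclusions yields
\[
P \subsetneq J \subseteq Q \subsetneq A,
\]
so $Q$ is a prime ideal that strictly contains $P$. This contradicts the assumption that $P$ is maximal among all prime ideals, completing the proof.

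There is no real obstacle here: the content of the corollary is precisely that the existence statement of \autoref{prp:SemiprimeClosure} upgrades maximality within the poset of prime ideals to maximality within the poset of all ideals, and the two-line contradiction above is the natural way to extract it.
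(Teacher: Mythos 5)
Your proof is correct and is exactly the intended argument: the paper states this as an immediate corollary of \autoref{prp:SemiprimeClosure} without writing out a proof, and the two-line contradiction you give (a strictly larger proper ideal would sit inside a prime ideal strictly containing $P$) is precisely what is meant.
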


We end this section with some remarks and questions related to \autoref{prp:SemiprimeClosure}.

\begin{rmk}
If $I \subseteq A$ is a proper, dense ideal in a \ca{}, then a continued application of \autoref{prp:SemiprimeClosure} provides an ascending chain of prime ideals that continues indefinitely -- unless one hits a maximal, dense ideal.
For some classes of \ca{s}, including the commutative ones, it is known that they contain no maximal, dense ideals, but in general the answer to the following question of Ozawa \cite{Oza:MO17MaxIdealsClosed} remains open:
\end{rmk}

\begin{qst}[Ozawa]
Is every maximal ideal in a \ca{} closed?
\end{qst}

\section{Applications to Lie theory of \texorpdfstring{$C^*$}{C*}-algebras}
\label{sec:Lie}

In this section, we use that every proper ideal in a \ca{} is contained in a prime ideal to transfer some results from the Lie theory of prime rings to the theory of nonclosed Lie ideals in \ca{s}.

\begin{pgr}
\label{pgr:LieThy}
Given subsets $X$ and $Y$ of a a \ca{}, we follow the standard convention and use $[X,Y]$ to denote the liner span of the set of commutators $[x,y] := xy-yx$ for $x \in X$ and $y \in Y$.
Similarly, we use $XY$ to the denote the linear span of the set of products $xy$ for $x \in X$ and $y \in Y$.
Note that $[X,Y]=[Y,X]$.

We will frequently use the Jocobi identity $[[a,b],c]+[[b,c],a]+[[c,a],b]=0$ for $a,b,c \in A$.
It implies that for subsets $X,Y,Z \subseteq A$, we have
\[
\big[ [X,Y],Z \big] 
\subseteq \big[ [Y,Z],X \big] + \big[ [Z,X],Y \big].
\]

A subspace $L \subseteq A$ is said to be a \emph{Lie ideal} if $[A,L] \subseteq L$.
If $K$ and $L$ are Lie ideals in $A$, then so is $[K,L]$.
In particular, $[A,A]$ and $[[A,A],[A,A]]$ are Lie ideals.
\end{pgr}

Given a \ca{}, we let $\widetilde{A}$ denote its minimal unitization.
If $M \subseteq A$ is a subspace, then the not necessarily closed ideal generated by $M$ is $\widetilde{A}M\widetilde{A} = M + AM + MA + AMA$, and in general this is strictly larger than $AMA$ since $M$ may not be contained in $AMA$.
Of course, if $A$ is unital then $\widetilde{A} = A$ and thus $\widetilde{A}M\widetilde{A}=AMA$.
The next result shows that this is also the case if $A$ is nonunital and $M$ is the commutator subspace.

\begin{prp}
\label{prp:CommutatorFactorization}
Let $A$ be a \ca{}.
Then
\[
[A,A] 
\ \subseteq \
A[A,A]A
\ = \ \widetilde{A}[A,A]\widetilde{A}.
\]
\end{prp}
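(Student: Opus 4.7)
First, the identity $A[A,A]A = \widetilde{A}[A,A]\widetilde{A}$ reduces to the inclusion $[A,A] \subseteq A[A,A]A$. Indeed, $A[A,A]A$ is a (not necessarily closed) two-sided ideal of $A$---since $A \cdot A[A,A]A \subseteq A[A,A]A$ because $A \cdot A \subseteq A$---so the expansion $\widetilde{A}[A,A]\widetilde{A} = [A,A] + A[A,A] + [A,A]A + A[A,A]A$ shows that once $[A,A] \subseteq A[A,A]A$ is known, $A[A,A]$ and $[A,A]A$ are automatically absorbed into $A[A,A]A$, giving equality.

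For the main inclusion, fix $a,b \in A$. The plan is to apply Pedersen's polar-decomposition-style factorization to a common positive element dominating both $a$ and $b$. Setting $h := (aa^* + a^*a + bb^* + b^*b)^{1/2} \in A_+$ gives $aa^*, a^*a \leq h^2$ and similarly for $b$, so by Pedersen's theorem there exist $\alpha, \alpha' \in A$ with $a = h\alpha = \alpha' h$ and $\beta, \beta' \in A$ with $b = h\beta = \beta' h$. A direct computation gives $ab = h\alpha\beta' h$ and $ba = h\beta\alpha' h$, whence $[a,b] = h(\alpha\beta' - \beta\alpha')h$. Using the relations $(\alpha'-\alpha)h = [h,\alpha]$ and $(\beta'-\beta)h = [h,\beta]$ (which follow from $h\alpha = \alpha'h$ and $h\beta = \beta'h$), I rewrite $\alpha\beta' - \beta\alpha' = [\alpha,\beta] + \alpha(\beta'-\beta) - \beta(\alpha'-\alpha)$, multiply by $h$ on the right and then by $h$ on the left, and obtain
\[ [a,b] = h[\alpha,\beta]h + h\alpha[h,\beta] - h\beta[h,\alpha]. \]
The first summand lies manifestly in $A[A,A]A$.

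The main obstacle is then the two remaining outer terms $h\alpha[h,\beta]$ and $h\beta[h,\alpha]$, which a priori belong only to $A \cdot [A,A]$. To place them in $A[A,A]A$, I would iterate the same Pedersen-factorization construction applied to the pairs $(h,\alpha)$ and $(h,\beta)$ with a new common positive element, yielding further identities that decompose these commutators into elements of $A[A,A]A$ plus smaller residuals of the same shape. Alternatively, one can try to strengthen the initial factorization to a two-sided version expressing $a$ and $b$ with $h$ absorbed on both sides simultaneously; after a single application of the Leibniz rule, such a factorization would place $[a,b]$ directly into $A[A,A]A$ via the identity $[ha''h, hb''h] = h([a'', h^2 b''] + [h^2, b'']a'')h$. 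The technical heart of the argument is to show that some such construction closes exactly, rather than only in the norm-closure sense.
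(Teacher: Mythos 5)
Your reduction of the equality $A[A,A]A = \widetilde{A}[A,A]\widetilde{A}$ to the inclusion $[A,A] \subseteq A[A,A]A$ is correct and matches the paper. But the main inclusion is not proved: your Pedersen-type factorization $a = h\alpha = \alpha' h$, $b = h\beta = \beta' h$ only controls one side at a time, and the resulting identity $[a,b] = h[\alpha,\beta]h + h\alpha[h,\beta] - h\beta[h,\alpha]$ leaves residual terms in $A[A,A]$ rather than $A[A,A]A$. Iterating the same construction on the pairs $(h,\beta)$ and $(h,\alpha)$ reproduces residuals of exactly the same shape $A\cdot[\,\cdot\,,\cdot\,]$ at every stage, so the process regresses indefinitely; there is no quantity that shrinks, and any limiting argument would only place $[a,b]$ in the norm closure of $A[A,A]A$, which is not enough since that ideal need not be closed. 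You correctly flag this as the unresolved technical heart, but the proposal does not close it, so there is a genuine gap.

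The missing ingredient is the multiple Cohen factorization theorem (\cite[Theorem~17.1]{DorWic79FactorizationBanachMod}). After reducing by bilinearity to $a,b \in A_+$, one factors $a^{1/2} = xy$ and $b^{1/2} = xz$ with a \emph{common} left factor $x \in A$; then $a = x(yy^*)x^*$ and $b = x(zz^*)x^*$ have the same outer factors on \emph{both} sides, which is precisely the ``two-sided version'' you speculate about. A short computation then gives
\[
[a,b] = x\big( yy^*[x^*x,zz^*] + [yy^*,zz^*x^*x] \big)x^* \in A[A,A]A
\]
exactly, with no residual terms. A positive element dominated by $h$ yields only one-sided factorizations through $h$, and there is no reason for the middle factors $\alpha', \beta'$ to factor through $h$ again; Cohen factorization is what supplies the simultaneous two-sided common factor that makes the identity close on the nose.
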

\begin{proof}
It suffices to show the inclusion $[A,A] \subseteq A[A,A]A$.
Let $a,b \in A$.
We need to verify that $[a,b] \in A[A,A]A$.
Using bilinearity of the Lie product, and that every element in $A$ is a linear combination of positive elements, we may assume that $a$ and $b$ are positive.
Using the multiple Cohen factorization theorem (\cite[Theorem~17.1]{DorWic79FactorizationBanachMod}) for $a^{1/2}$ and $b^{1/2}$, we obtain elements $x,y,z \in A$ such that
\[
a^{1/2} = xy, \andSep
b^{1/2} = xz.
\]
Then $a=xyy^*x^*$ and $b=xzz^*x^*$, and thus
\begin{align*}
[a,b]
&= x\big( yy^*x^*xzz^* - zz^*x^*xyy^* \big)x^* \\
&= x\big( yy^*[x^*x,zz^*] + yy^*zz^*x^*x - zz^*x^*xyy^* \big)x^* \\
&= x\big( yy^*[x^*x,zz^*] + [yy^*,zz^*x^*x] \big)x^*,
\end{align*}
which verifies $[a,b] \in A[A,A]A$.
\end{proof}

Given a Lie ideal $L$ in a \ca{} $A$, the next result characterizes when $[L,L]$ generates $A$ as a not necessarily closed ideal.
For the case $L = A$, this question is studied more thoroughly in \cite{GarThi23arX:GenByCommutators}.

\begin{thm}
\label{prp:Lie}
Let $L \subseteq A$ be a Lie ideal in a \ca{}.
Then the following are equivalent:
\begin{enumerate}
\item
$A = A\big[ [L,L],[L,L] \big]A$.
\item
$A = A[L,L]A$.
\item
$A = A[A,L]A$.
\item
$A=\widetilde{A}[A,L]\widetilde{A}$, that is, $[A,L]$ generates $A$ as a not necessarily closed ideal.
\end{enumerate}
Further, if this is the case, then $[A,A] = [L,L] \subseteq L$.
\end{thm}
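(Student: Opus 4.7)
The chain $(1) \Rightarrow (2) \Rightarrow (3) \Rightarrow (4)$ is immediate from the inclusions $[[L,L],[L,L]] \subseteq [L,L] \subseteq [A,L]$ (the first holding since $[L,L]$ is a Lie subalgebra of $A$) together with $A[A,L]A \subseteq \widetilde{A}[A,L]\widetilde{A}$. So the real content is the implication $(4) \Rightarrow (1)$ and the moreover clause.

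For $(4) \Rightarrow (1)$, I would argue by contradiction, with \autoref{prp:SemiprimeClosure} as the pivot. Suppose $\widetilde{A}[A,L]\widetilde{A} = A$ while $A[[L,L],[L,L]]A$ is a proper ideal. By \autoref{prp:SemiprimeClosure} the latter is contained in some prime ideal $P \subseteq A$. A short primeness argument shows that $\bar{A}\bar{x}\bar{A}=0$ forces $\bar{x}=0$ in the prime ring $A/P$, so passing to the quotient one obtains $[[\bar{L},\bar{L}],[\bar{L},\bar{L}]] = 0$ in $A/P$. At this point I invoke the classical theorem of Herstein \cite{Her69TopicsRngThy} from the Lie theory of prime rings: in a prime ring of characteristic different from $2$, any Lie ideal $U$ satisfying $[[U,U],[U,U]] = 0$ is contained in the center. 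Since $A/P$ is a $\CC$-algebra (ideals in \ca{s} are automatically $\CC$-subspaces), this applies to $\bar{L}$, giving $\bar{L} \subseteq Z(A/P)$, whence $[A,L] \subseteq P$, and so $\widetilde{A}[A,L]\widetilde{A} \subseteq P \subsetneq A$, contradicting (4).

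For the moreover clause, the containments $[L,L] \subseteq [A,L] \subseteq L$ follow from the Lie ideal property and $[L,L] \subseteq [A,A]$ is trivial; only $[A,A] \subseteq [L,L]$ requires work. My first instinct is another prime-quotient argument: under the equivalent conditions, $\bar{L}$ generates $A/P$ as a ring ideal, so Herstein's structure theory of Lie ideals in prime rings yields $[\bar{A},\bar{A}] \subseteq \bar{L}$, whence $[\bar{A},\bar{A}] = [\bar{L},\bar{L}]$ using that $[\bar{A},\bar{A}]$ is Lie-perfect in such quotients. The main obstacle I anticipate is lifting this prime-by-prime equality to an actual equality in $A$: for arbitrary subspaces $\bigcap_P \big([L,L] + P\big)$ can strictly exceed $[L,L]$, so a naive intersection over primes is too weak. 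The resolution likely proceeds by a direct algebraic computation instead: use the decomposition $A = A[[L,L],[L,L]]A$ furnished by (1) to write $a = \sum x_i c_i y_i$ with $c_i \in [[L,L],[L,L]]$, then apply the Leibniz rule to $[a,b] = \sum [x_i c_i y_i, b]$ and exploit $[[L,L],A] \subseteq [L,L]$ (together with repeated use of the Jacobi identity recorded in \autoref{pgr:LieThy}) to rewrite every term as a commutator of elements of $[L,L]$.
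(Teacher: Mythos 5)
Your argument follows the paper's proof in all essentials: the chain $(1)\Rightarrow(2)\Rightarrow(3)\Rightarrow(4)$ via the same inclusions, and $(4)\Rightarrow(1)$ by passing to a prime quotient through \autoref{prp:SemiprimeClosure} and invoking the Lanski--Montgomery/Herstein fact that a Lie ideal $U$ of a prime ring of characteristic $\neq 2$ with $[[U,U],[U,U]]=0$ is central (the paper applies \cite[Lemma~7]{LanMon72LieStrPrimeChar2} twice to get exactly this). For the ``moreover'' clause the paper likewise rests on the identity $[A,A[K,K]A]\subseteq K$ for a Lie ideal $K$ (citing \cite[Lemma~2.1]{Lee22AddSubgpGenNCPoly}); the direct computation you sketch is precisely this identity applied to $K=[L,L]$, and your diagnosis that a prime-by-prime equality does not lift to $A$ is the same reason the paper avoids that route.
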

\begin{proof}
Since $L$ is a Lie ideal in $A$, we have $[[L,L],[L,L]] \subseteq [L,L] \subseteq [A,L]$, which shows that~(1) implies~(2), and that~(2) implies~(3).
Clearly, (3) implies~(4).

To show that~(4) implies~(1), assume that $A = \widetilde{A}[A,L]\widetilde{A}$, and consider the ideal $I := A[[L,L],[L,L]]A$.
To reach a contradiction, assume that $I \neq A$.
Then, by \autoref{prp:SemiprimeClosure}, there exists a prime ideal $J \subseteq A$ containing $I$.

Consider the prime ring $B := A/J$, and let $K$ denote the image of $L$ in $B$.
Then~$K$ is a Lie ideal of $B$ such that $[[K,K],[K,K]]=\{0\}$. 
In particular, $M:=[K,K]$ is a Lie ideal in $B$ such that $[M,M]$ is contained in the center~$Z(B)$ of~$B$.
Then $[K,K] = M \subseteq Z(B)$ by \cite[Lemma~7]{LanMon72LieStrPrimeChar2}.
Applying the same result again for $K$, we get $K \subseteq Z(B)$, and thus $[B,K]=\{0\}$.

On the other hand, we have $A = \widetilde{A}[A,L]\widetilde{A} = A[A,L]A + A[A,L] + [A,L]A + [A,L]$.
Applying the quotient map $A \to B$ everywhere, we get
\[
B = B[B,K]B + B[B,K] + [B,K]B + [B,K] = \{0\}.
\]
which is the desired contradiction.

This shows that~(1)-(4) are equivalent.
Next, we establish the following claim:

Claim~1: \emph{Let $K \subseteq A$ be a Lie ideal such that $A=A[K,K]A$. 
Then $[A,A] \subseteq K$.}
Indeed, it is known that $[A,A[K,K]A] \subseteq K$ holds in general (see, for example, \cite[Lemma~2.1]{Lee22AddSubgpGenNCPoly}), and thus
\[
[A,A] = \big[ A, A[K,K]A \big] \subseteq K.
\]
This proves the claim.

Claim~2: \emph{If $A=A[A,A]A$, then $[A,A] = [[A,A],[A,A]]$.}
Indeed, if $A=A[A,A]A$, then $A=A[[A,A],[A,A]]A$ by the equivalence of~(1) and~(2) applied for the Lie ideal $L=A$.
Applying the same equivalence again, this time for $[A,A]$, we deduce that $A=A[[[A,A],[A,A]],[[A,A],[A,A]]]A$.
Thus, for the Lie ideal $K=[[A,A],[A,A]]$, we have $A=A[K,K]A$, and thus $[A,A] \subseteq K = [[A,A],[A,A]]$ by Claim~1.
The revers inclusion is clear, which proves the claim.

Now, assuming that~(1)-(4) hold for $L$, let us verify that $[A,A] = [L,L] \subseteq L$.
The inclusions $[L,L] \subseteq [A,A]$ and $[L,L] \subseteq L$ are clear.
By Claim~1, we have $[A,A] \subseteq L$.
Further, since $A = A[L,L]A$, we also have $A = A[A,A]A$.
Using Claim~2 at the first step, it follows that
\[
[A,A] 
= \big[ [A,A],[A,A] \big]
\subseteq [L,L],
\]
as desired.
\end{proof}

We highlight the following result, which was established as Claim~2 in the proof of \autoref{prp:Lie}.

\begin{cor}
\label{prp:HigherCommutator}
Let $A$ be a \ca{} that is generated by its commutators as a not necessarily closed ideal.
Then $[[A,A],[A,A]]=[A,A]$.
\end{cor}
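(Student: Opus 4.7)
The plan is to deduce this directly by applying \autoref{prp:Lie} twice and invoking a general fact about Lie ideals $K$ satisfying $A = A[K,K]A$. The hypothesis says $A = \widetilde{A}[A,A]\widetilde{A}$, which by \autoref{prp:CommutatorFactorization} is the same as $A = A[A,A]A$. This is condition~(2) of \autoref{prp:Lie} applied to the Lie ideal $L = A$.

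First I would apply \autoref{prp:Lie} with $L = A$ and use the equivalence (2)$\Leftrightarrow$(1) to upgrade $A = A[A,A]A$ to $A = A[[A,A],[A,A]]A$. Next, observing that $[A,A]$ is itself a Lie ideal in $A$, I would apply \autoref{prp:Lie} a second time, now with $L = [A,A]$: the statement just obtained is condition~(2) for this choice of $L$, so the equivalence (2)$\Leftrightarrow$(1) yields
\[
A = A\bigl[\,[[A,A],[A,A]],\,[[A,A],[A,A]]\,\bigr]A.
\]
Setting $K := [[A,A],[A,A]]$, which is a Lie ideal of $A$, this reads $A = A[K,K]A$.

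Finally I would invoke the auxiliary fact (labelled Claim~1 inside the proof of \autoref{prp:Lie}, and attributed there to \cite[Lemma~2.1]{Lee22AddSubgpGenNCPoly}) that for any Lie ideal $K$ with $A = A[K,K]A$ one has $[A,A] \subseteq K$. Applied to our $K$, this gives $[A,A] \subseteq [[A,A],[A,A]]$. The reverse inclusion is immediate since $[[A,A],[A,A]]$ is the linear span of commutators of elements of $[A,A]$, hence contained in $[A,A]$. Combining the two inclusions yields $[[A,A],[A,A]] = [A,A]$.

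There is essentially no obstacle here, since the corollary is a formal specialization of the machinery already built for \autoref{prp:Lie}; the only mildly nontrivial point is recognizing that \autoref{prp:Lie} can be reapplied to the Lie ideal $[A,A]$ to produce the second iteration of commutators. Everything else is bookkeeping.
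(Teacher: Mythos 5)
Your proposal is correct and is essentially identical to the paper's own argument: the corollary is proved there as Claim~2 inside the proof of \autoref{prp:Lie}, by exactly the same two applications of the equivalence (1)$\Leftrightarrow$(2) (first for $L=A$, then for $L=[A,A]$) followed by Claim~1. The only cosmetic difference is that you pass from $\widetilde{A}[A,A]\widetilde{A}$ to $A[A,A]A$ via \autoref{prp:CommutatorFactorization}, whereas one could equally use the equivalence of (2) and (4) in \autoref{prp:Lie}; both are valid.
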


It remains an open question if the conclusion of \autoref{prp:HigherCommutator} holds in general:

\begin{qst}
\label{qst:HigherCommutator}
Does $[[A,A],[A,A]]=[A,A]$ hold for every \ca?
\end{qst}

\begin{thm}
\label{prp:LieTwo}
Let $A$ be a \ca{}, and let $K,L \subseteq A$ be Lie ideals.
Then the following are equivalent:
\begin{enumerate}
\item
$A = A[K,L]A$.
\item
$A=\widetilde{A}[K,L]\widetilde{A}$, that is, $[K,L]$ generates $A$ as a not necessarily closed ideal.
\item
$A = A[K,K]A = A[L,L]A$.
\item
$A = \widetilde{A}[K,K]\widetilde{A} = \widetilde{A}[L,L]\widetilde{A}$, that is, $[K,K]$ and $[L,L]$ generate $A$ as a not necessarily closed ideal.
\end{enumerate}
Further, if this is the case, then $[A,A] = [K,L]$.
\end{thm}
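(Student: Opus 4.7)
The plan is to run the chain $(1)\Rightarrow(2)\Rightarrow(4)\Leftrightarrow(3)\Rightarrow(1)$, obtaining the moreover statement along the way. The key observation is that the prime-ideal content already sits inside \autoref{prp:Lie} and \autoref{prp:HigherCommutator}, so no fresh argument via \autoref{prp:SemiprimeClosure} is required: one simply applies \autoref{prp:Lie} to $K$ and to $L$ separately.

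The implication $(1)\Rightarrow(2)$ is immediate. For $(2)\Rightarrow(4)$, I would note that $[K,L]\subseteq[A,K]\cap[A,L]$ since $K,L\subseteq A$, so $\widetilde{A}[K,L]\widetilde{A}=A$ forces both $\widetilde{A}[A,K]\widetilde{A}=A$ and $\widetilde{A}[A,L]\widetilde{A}=A$; applying the equivalence of conditions~(2) and~(4) in \autoref{prp:Lie} first to $K$ and then to $L$ yields $A[K,K]A=A[L,L]A=A$, and passing to the unitizations gives~(4). The implication $(4)\Rightarrow(3)$ is symmetric: since $[K,K]\subseteq[A,K]$, $\widetilde{A}[K,K]\widetilde{A}=A$ gives $\widetilde{A}[A,K]\widetilde{A}=A$, which by \autoref{prp:Lie} gives $A[K,K]A=A$, and likewise for $L$.

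The main step is $(3)\Rightarrow(1)$, which will also deliver the moreover. Assume $A[K,K]A=A[L,L]A=A$. The moreover part of \autoref{prp:Lie} applied to $K$ and to $L$ separately gives
\[
[A,A]=[K,K]\subseteq K
\andSep
[A,A]=[L,L]\subseteq L,
\]
so $[A,A]\subseteq K\cap L$ and $A=A[A,A]A$. By \autoref{prp:HigherCommutator}, $[A,A]=[[A,A],[A,A]]$; since both slots of this double commutator lie in $K$ respectively in $L$, we get $[[A,A],[A,A]]\subseteq[K,L]$, hence $[A,A]\subseteq[K,L]$. The reverse inclusion is clear, so $[K,L]=[A,A]$, and therefore $A=A[A,A]A=A[K,L]A$, proving~(1); the equality $[A,A]=[K,L]$ is precisely the moreover.

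The main obstacle is conceptual rather than technical: one must spot that the prime-ideal contradiction argument from the proof of \autoref{prp:Lie} need not be repeated here. A more direct but longer route would instead assume $\widetilde{A}[K,L]\widetilde{A}\neq A$, invoke \autoref{prp:SemiprimeClosure} to embed it in a prime ideal $J$, pass to the prime quotient $B:=A/J$ where the images of $K$ and $L$ are Lie ideals with trivial mutual commutator, and apply a classical result on prime rings of characteristic not $2$ (cf.\ \cite[Lemma~7]{LanMon72LieStrPrimeChar2}) to force one of them into $Z(B)$, contradicting~(4).
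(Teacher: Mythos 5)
Your proposal is correct and follows essentially the same route as the paper: the forward implications are reduced to \autoref{prp:Lie} via the inclusions $[K,L]\subseteq[A,K]\cap[A,L]$ and $[K,K]\subseteq[A,K]$, and the step $(3)\Rightarrow(1)$ together with the moreover statement is obtained exactly as in the paper, by combining \autoref{prp:HigherCommutator} with the final claim of \autoref{prp:Lie} to get $[A,A]=[[A,A],[A,A]]\subseteq[K,L]\subseteq[A,A]$. The only cosmetic difference is the ordering of the cycle of implications, and your closing remark is also accurate: no fresh appeal to \autoref{prp:SemiprimeClosure} is needed.
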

\begin{proof}
It is clear that~(1) implies~(2).
By \autoref{prp:Lie}, (3) and~(4) are equivalent.
To show that~(2) implies~(3), assume that $A=\widetilde{A}[K,L]\widetilde{A}$.
Since $[K,L] \subseteq [A,L]$, we deduce that $A=\widetilde{A}[A,L]\widetilde{A}$, and thus $A=A[L,L]A$ by \autoref{prp:Lie}.
Similarly, we get $A=A[K,K]A$.

To show that~(3) implies~(1), assume that $A = A[K,K]A = A[L,L]A$.
Since $[K,K] \subseteq [A,A]$, we obtain that $A = A[A,A]A$ and thus $[A,A] = [[A,A],[A,A]]$ by \autoref{prp:HigherCommutator}.
Further, we have $[A,A] \subseteq K,L$ by the last claim in \autoref{prp:Lie}.
It follows that
\[
[A,A] 
= \big[ [A,A], [A,A] \big] 
\subseteq [K,L] 
\subseteq [A,A].
\]
and thus these inclusions are equalities.
\end{proof}

\section{The subspace generated by square-zero elements in a \texorpdfstring{$C^*$}{C*}-algebra}
\label{sec:N2}

Given a \ca{} $A$, let $N$ denote the subspace generated by the set $N_2(A) := \{ x \in A : x^2 = 0 \}$ of square-zero elements in $A$.
It is know that $N \subseteq [A,A]$ (see, for example, \cite[Lemma~2.1]{Rob16LieIdeals}), and Robert asks in \cite[Question~2.5]{Rob16LieIdeals} if the converse inclusion also holds:

\begin{qst}[{Robert}]
\label{qst:Robert}
Do we have $[A,A] = N$?
Is $N$ a Lie ideal?
\end{qst}

We show that $[N,N] = N$ always holds;
see \autoref{prp:N}.
We further show that $ANA = \widetilde{A}N\widetilde{A} = [A,N]^2$, and while we are not able to show that $N$ is a Lie ideal, we prove that $[A,[A,N]]$ is always a Lie ideal;
see \autoref{prp:IdealGenByN2}.

\begin{thm}
\label{prp:N}
Let $A$ be a \ca{}, and let $N$ denote the subspace generated by the square-zero elements in~$A$.
Then $N=[N,N]$ and $N \subseteq N^2$.
\end{thm}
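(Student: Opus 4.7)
The strategy is to study a single square-zero element $x \in A$ in detail and extend the statements to all of $N$ by bilinearity. Observe that the second assertion $N \subseteq N^2$ follows automatically from $N = [N,N]$, since $[a,b] = ab - ba$ lies in $N \cdot N = N^2$ whenever $a, b \in N$.

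The key observation is that if $x^2 = 0$, then the positive element $h := x^*x + xx^* \in A_+$ commutes with both $x$ and $x^*$: indeed $hx = xx^*x = xh$, using $x^2 = (x^*)^2 = 0$. By continuous functional calculus, $f(h)$ commutes with $x$ and $x^*$ for every continuous $f$, and hence whenever $f(0) = 0$ both $f(h)x$ and $f(h)x^*$ are themselves square-zero, since $(f(h)x)^2 = f(h)^2 x^2 = 0$. This supplies an abundant family of square-zero elements adapted to $x$, which will serve as the building blocks for the identities below.

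For the inclusion $[N,N] \subseteq N$, by bilinearity it suffices to show $[a,b] \in N$ for $a, b \in N_2(A)$. The relations $(ab)(ba) = a(b^2)a = 0$ and $(ba)(ab) = b(a^2)b = 0$ show that the subalgebra generated by $a$ and $b$ is highly nilpotent and mimics a $2 \times 2$ matrix block, on which an explicit identity, modelled on the $M_2(\CC)$ fact that every traceless matrix is a linear combination of nilpotents, exhibits $[a,b] = ab - ba$ as a sum of square-zero elements.

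For the reverse inclusion $N \subseteq [N,N]$, I would lift to $A$ the $M_2(\CC)$-identity
\[
E_{12}
= -\tfrac{1}{2}\bigl([E_{12},\, E_{11}-E_{22}+E_{12}-E_{21}] + [E_{12},\, E_{21}]\bigr),
\]
whose bracket entries are all nilpotent, in the matricial picture where $x$ plays the role of $hE_{12}$ and $x^*$ that of $hE_{21}$, with $h$ central. To handle the non-invertibility of $h$ inside $A$, I would invoke the multiple Cohen--Hewitt factorization theorem, as in the proof of \autoref{prp:CommutatorFactorization}, to obtain $y, z \in A$ with $x = h y$ and $x^* = h z$; the commutation $hx = xh$ then forces $y^2 = z^2 = 0$, and the lifted identity realizes $x$ as a linear combination of commutators of elements of $N_2(A)$. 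The principal obstacle is the asymmetry required in the choice of bracket entries, since ``obvious'' commutators of elements $f(h)x$ and $g(h)x^*$ collapse to self-adjoint combinations in $x^*x$ and $xx^*$ rather than to $x$ itself; the central role of $h$ together with Cohen--Hewitt factorization is what makes the asymmetric lift feasible inside $A$ rather than only inside $A^{**}$.
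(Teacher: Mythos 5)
The substantive gap is in the inclusion $N \subseteq [N,N]$, which is where the real content of the theorem lies, and it is exactly the obstacle you flag yourself without resolving. Commutators of the elements $f(h)x$ and $g(h)x^*$ that your setup naturally produces collapse to $f(h)g(h)(xx^*-x^*x)$, never to $x$; to get $x$ one needs an element of $A$ playing the role of the difference $q-p$ of the left and right support projections of $x$. The Cohen factorization $x=hy$, $x^*=hz$ does not supply such an element: even granting $y^2=z^2=0$ (which does not follow merely from ``the commutation $hx=xh$'' as you assert --- you would need the refined form of the factorization theorem placing $y$ in the closed cyclic module $\overline{C^*(h)x}$, whose elements are square-zero by your own commutation argument), the entry $E_{11}-E_{22}+E_{12}-E_{21}$ of your $M_2$-identity must be lifted to something containing a function of $h$ times $q-p$, which in general lies in $A^{**}$ but not in $A$, and your proposal offers no substitute. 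The paper's proof closes precisely this gap by distributing the powers of $|x|$ asymmetrically: with $x=v|x|$ the polar decomposition in $A^{**}$, the elements $v|x|^{1/2}$, $|x|^{1/2}$, $|x^*|^{1/2}$, $|x|^{1/4}v^*$, $v|x|^{1/4}$ all lie in $A$, the relation $pq=0$ gives $|x|^{1/2}|x^*|^{1/2}=0$ and $|x|^{1/2}v=0$, and one computes
\[
x=\Bigl[\tfrac12 v|x|^{1/2},\ |x|^{1/2}-|x^*|^{1/2}\Bigr],
\qquad
|x|^{1/2}-|x^*|^{1/2}=\Bigl[\,|x|^{1/4}v^*,\ v|x|^{1/4}\Bigr],
\]
so that $x\in[N,[N,N]]\subseteq[N,N]$. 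Your $h=x^*x+xx^*$ equals $|x|^2+|x^*|^2$, and functional calculus in $h$ alone cannot separate the two summands; the partial isometry $v$ (equivalently, the elements $vf(|x|)$) is the missing ingredient.

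The inclusion $[N,N]\subseteq N$ is also left unfinished: you assert that an explicit identity exhibits $[a,b]$ as a sum of square-zero elements without producing it, and the supporting claim that the subalgebra generated by two square-zero elements is ``highly nilpotent'' is false (take $E_{12}$ and $E_{21}$ in $M_2(\CC)$: they generate all of $M_2(\CC)$, and their commutator $E_{11}-E_{22}$ is not nilpotent). The identity the paper uses is one line: for $x,y\in N_2(A)$ one has $[x,y]=(1+x)y(1-x)-y+xyx$, and each summand is square-zero because $(1-x)(1+x)=1$. Your reduction of $N\subseteq N^2$ to $N=[N,N]$ matches the paper and is fine, as is your observation that $f(h)x$ is square-zero for $f(0)=0$; but as it stands neither inclusion of $N=[N,N]$ is actually proved.
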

\begin{proof}
We first show that $[N,N] \subseteq N$.
Given $x,y \in N_2(A)$, we have
\[
[x,y]
= xy-yx
= (1+x)y(1-x) - y + xyx
\]
and each of the last three summands belongs to $N_2(A)$, and so $[x,y] \in N$.

\smallskip 

Next, we verify that $N \subseteq [N,N]$.
Let $x \in N_2(A)$.
Let $p,q \in A^{**}$ be the right and left support projections of $x$, that is, $p$ is the smallest projection in $A^{**}$ satisfying $x=xp$, and similarly for $q$ with $x=qx$.
Consider the polar decomposition $x=v|x|$ in $A^{**}$, with $|x| = (x^*x)^{1/2}$ and $v$ a partial isometry satisfying $q=vv^*$ and $p=v^*v$.

Then $v|x|^{1/2}$, $|x|^{1/2}$ and $|x^*|^{1/2}$ belong to $A$, and we have $|x|^{1/2} = p|x|^{1/2}p$ and $|x^*|^{1/2} = q|x^*|^{1/2}q$.
Using that $x^2=0$ we deduce that $pq=0$ and therefore
\[
|x|^{1/2}|x^*|^{1/2} 
= |x|^{1/2}pq|x^*|^{1/2} 
= 0, \andSep
|x|^{1/2}v
= |x|^{1/2}pqv
= 0.
\]
Further, we have $|x^*|^{1/2}v=v|x|^{1/2}$.
It follows that
\[
x = \left[ \tfrac{1}{2} v|x|^{1/2}, |x|^{1/2} -  |x^*|^{1/2} \right].
\]
Further, we have
\[
|x|^{1/2} -  |x^*|^{1/2}
= \big(|x|^{1/4}v^*\big)\big(v|x|^{1/4}\big) -  \big(v|x|^{1/4}\big)\big(|x|^{1/4}v^*\big).
\]

Since the elements $\tfrac{1}{2} v|x|^{1/2}$, $|x|^{1/4}v^*$ and $v|x|^{1/4}$ have square-zero, it follows that
\[
x \in \big[ N,[N,N] \big] \subseteq [N,N].
\]

\smallskip 

Finally, using that every commutator belongs to $N^2$, we deduce that
\[
N \subseteq [N,N] 
\subseteq N^2,
\]
as desired.
\end{proof}

\begin{rmk}
Note that if $[A,A]=N$, then 
\[\big[[A,A],[A,A]\big] = [N,N] = N = [A,A],\] 
and hence a positive answer to \autoref{qst:Robert} entails a positive answer to \autoref{qst:HigherCommutator}.
\end{rmk}

\begin{thm}
\label{prp:IdealGenByN2}
Let $A$ be a \ca{}, and let $N$ denote the subspace generated by the square-zero elements in~$A$.
Then
\[
ANA = \widetilde{A}N\widetilde{A} = [A,N]^2, \andSep
\big[ A,[A,N] \big]
= \big[ ANA,[A,N] \big]
= [ANA,ANA].
\]
In particular, $[A,[A,N]]$ is a Lie ideal in $A$.
\end{thm}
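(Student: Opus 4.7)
I will establish the two displayed equality chains in turn, and derive the Lie-ideal property as an immediate consequence. In each chain, one direction is easy from the fact that $ANA$ is the two-sided ideal generated by $N$; the substance is in the reverse direction.

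For the chain $ANA = \widetilde{A}N\widetilde{A} = [A,N]^2$, I first show $N \subseteq ANA$. Given $x \in N_2(A)$, combining the polar decomposition $x = v|x|$ with the identity $|x^*|^{1/4}v = v|x|^{1/4}$ from the proof of \autoref{prp:N} yields the factorization
\[
x = |x^*|^{1/2} \cdot (v|x|^{1/4}) \cdot |x|^{1/4},
\]
in which the outer factors lie in $A$ and the middle factor $v|x|^{1/4}$ is square-zero, hence in $N$. Since $ANA$ is a two-sided ideal of $A$, this forces $\widetilde{A}N\widetilde{A} = N + AN + NA + ANA = ANA$, and $[A,N]^2 \subseteq ANA$ is automatic from $[A,N] \subseteq AN + NA \subseteq ANA$ together with the ideal property. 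For the reverse inclusion $ANA \subseteq [A,N]^2$, first note that $N \subseteq [A,N]^2$: by \autoref{prp:N} we have $N \subseteq [N,N] \subseteq [A,N]$ and $N \subseteq N^2$, hence $N \subseteq N^2 \subseteq [A,N]^2$. It then suffices to show $[A,N]^2$ is stable under left and right multiplication by $A$. Here I would use the identities $a[b,n] = [ab,n] - [a,n]b$ and $[b,n]a = [b,na] + n[a,b]$, together with the polar-decomposition factorization $n = yz$ with $y \in N$, $z \in A_+$ and $zy = 0$ (which arises in the proof of \autoref{prp:N}), to rewrite $anb = ayzb = [ay,zb] + zbay$ and iteratively trade ideal elements for sums in $[A,N]^2$.

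For the chain $[A,[A,N]] = [ANA,[A,N]] = [ANA,ANA]$, a crucial preliminary observation is that $[A,A] \subseteq ANA$: the quotient $A/ANA$ contains no nonzero square-zero element and is therefore commutative, so all commutators of $A$ vanish modulo $ANA$. The inclusions $[ANA,[A,N]] \subseteq [A,[A,N]]$ and $[ANA,[A,N]] \subseteq [ANA,ANA]$ are immediate from $ANA \subseteq A$ and $[A,N] \subseteq ANA$. For $[A,[A,N]] \subseteq [ANA,[A,N]]$, I apply Jacobi: $[a,[b,n]] = [[a,b],n] + [b,[a,n]]$; the first summand lies in $[ANA,[A,N]]$ because $[a,b] \in [A,A] \subseteq ANA$ and $n \in N \subseteq [A,N]$, and the second summand has the same shape as the left-hand side, which I would reduce by using the first chain to write $[a,n] \in [A,N] \subseteq [A,N]^2$ as $\sum_i p_i q_i$ with $p_i, q_i \in [A,N]$ and expanding via Leibniz. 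For $[ANA,ANA] \subseteq [ANA,[A,N]]$, I would again use $ANA = [A,N]^2$ to present the outer-slot element as a product of two $[A,N]$-commutators, and expand the resulting commutator via Leibniz so that every factor contains an element of $[A,N]$. The Lie-ideal property of $[A,[A,N]]$ then follows at once: since $ANA$ is a two-sided ideal, the Jacobi identity $[a,[x,y]] = [[a,x],y] + [x,[a,y]]$ for $a \in A$ and $x,y \in ANA$ shows that $[ANA,ANA]$ is a Lie ideal of $A$, and the equality $[A,[A,N]] = [ANA,ANA]$ transfers this property.

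The hardest step will be the inclusion $ANA \subseteq [A,N]^2$ in the first chain. The polar-decomposition factorization $n = yz$ with $zy = 0$ seems indispensable: it converts the product $anb$ into $ayzb$, which decomposes as $[ay,zb] + zbay$, trading an ideal element for a commutator plus a similar-shape remainder. Carrying out the bookkeeping that closes the iteration, and ensuring that the remainders ultimately land in $[A,N]^2$, will be the main technical challenge. Once this inclusion is secured, the second chain and the Lie-ideal property follow by Jacobi-identity manipulations backed by the key fact $[A,A] \subseteq ANA$.
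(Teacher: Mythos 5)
The central step --- the inclusion $ANA \subseteq [A,N]^2$ --- is left as an acknowledged ``iteration'' that is not carried out, and the sketch you give does not close. Reducing to the claim that $[A,N]^2$ is stable under multiplication by $A$ runs into a circularity: the identity $a[b,n]=[ab,n]-[a,n]b$ gives $A[A,N]^2 \subseteq [A,N]^2 + [A,N]^2A$, and the symmetric identity gives $[A,N]^2A \subseteq [A,N]^2 + A[A,N]^2$, so the leftover one-sided factor is never absorbed. Likewise, in $anb = [ay,zb] + zbay$ the commutator $[ay,zb]$ is merely an element of $[A,A]$, not visibly of $[A,N]$ or $[A,N]^2$, and the remainder $zbay$ is again a generic element of $AN$, so the proposed trade does not make progress. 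The missing idea is to exploit $N=[N,N]$ \emph{twice}: first, $a[x,y]b = a[[x,y],b]+ab[x,y]$ (for $x,y$ square-zero) shows $\widetilde{A}N\widetilde{A}=\widetilde{A}[N,N]\widetilde{A} \subseteq \widetilde{A}[A,N]$, i.e.\ one of the two outer factors can be eliminated entirely; second, the Jacobi identity gives $[A,N]=[A,[N,N]]\subseteq[[A,N],N]=[[A,N],[N,N]]\subseteq[A,N]^2$, which both upgrades $[A,N]$ to $[A,N]^2$ and lets the identity $a[x,y]=[ax,y]+[y,a]x$ absorb the remaining left factor via $\widetilde{A}[A,N]\subseteq \widetilde{A}[[A,N],N] \subseteq [A,N]+[A,N]^2$. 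Without some such mechanism your first chain is not established, and the second chain depends on it.

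A secondary problem: your ``crucial preliminary observation'' that $[A,A]\subseteq ANA$ is not justified. The ideal $ANA$ is not closed, so $A/ANA$ is only a ring, not a \ca{}; the implication ``no nonzero square-zero elements $\Rightarrow$ commutative'' is a genuinely C*-algebraic fact (false for general rings, e.g.\ noncommutative domains) and cannot be applied to this quotient --- and even the premise fails, since $x^2\in ANA$ does not force $x\in ANA$. Whether $[A,A]\subseteq ANA$ holds is essentially \autoref{qst:Robert} territory and should not be assumed. Fortunately it is not needed: for $[A,[A,N]]\subseteq[ANA,[A,N]]$ one can skip the Jacobi step and directly write $[b,n]\in[A,N]\subseteq[A,N]^2$ as a sum of products $pq$ with $p,q\in[A,N]$ and expand $[a,pq]=[ap,q]+[qa,p]$ with $ap,qa\in ANA$ --- which is the reduction you describe for your ``second summand'' and is exactly what is needed. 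Your factorization $x=|x^*|^{1/2}\,(v|x|^{1/4})\,|x|^{1/4}$ proving $N\subseteq ANA$ is correct and is a valid alternative to deducing it from $N\subseteq N^3$.
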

\begin{proof}
We will repeatedly use that $N=[N,N] \subseteq N^2$ by \autoref{prp:N}.
Since the unit of $\widetilde{A}$ commutes with every element in $A$, we have $[\widetilde{A},X]=[A,X]$ for every subset $X \subseteq A$.

We have $ANA \subseteq \widetilde{A}N\widetilde{A}$.
Conversely, since $\widetilde{A}N \subseteq A$ and $N\widetilde{A} \subseteq A$, we also have
\[
\widetilde{A}N\widetilde{A}
\subseteq \widetilde{A}N^3\widetilde{A}
\subseteq ANA.
\]
Thus, $ANA = \widetilde{A}N\widetilde{A}$.
The inclusion $[A,N]^2 \subseteq \widetilde{A}N\widetilde{A}$ is clear.
We now show the converse inclusion.

Given $a,b \in \widetilde{A}$ and $x,y \in N_2(A)$, we have $a[x,y]b = a[[x,y],b] + ab[x,y]$ and thus
\[
\widetilde{A}N\widetilde{A}
= \widetilde{A}[N,N]\widetilde{A}
\subseteq \widetilde{A}\big[ [N,N],\widetilde{A} \big] + \widetilde{A}[N,N].
\]
Using that $[[N,N],\widetilde{A}] = [N,\widetilde{A}] = [N,A] = [A,N]$, and $[N,N] \subseteq [A,N]$, we get 
\begin{equation}
\label{prp:IdealGenByN2:eq1}
\widetilde{A}N\widetilde{A} 
\subseteq \widetilde{A}[A,N].
\end{equation}

Further, using the Jacobi identity at the second step, we have
\begin{equation}
\label{prp:IdealGenByN2:eq2}
[A,N]
= \big[ A,[N,N] \big]
\subseteq \big[ N,[N,A] \big] + \big[ N,[A,N] \big]
= \big[ [A,N],N \big].
\end{equation}

Next, given $a \in \widetilde{A}$ and $x,y \in N_2(A)$, we have $a[x,y] = [ax,y] + [y,a]x$ and thus
\begin{equation}
\label{prp:IdealGenByN2:eq3}
\widetilde{A}[A,N]
= \widetilde{A}\big[ [A,N],N \big]
\subseteq \big[ \widetilde{A}[A,N],N \big] + [N,\widetilde{A}][A,N]
\subseteq [A,N] + [A,N]^2.
\end{equation}

We get
\[
[A,N]
\stackrel{\eqref{prp:IdealGenByN2:eq2}}{\subseteq} \big[ [A,N],N \big]
= \big[ [A,N],[N,N] \big]
\subseteq \big[ [A,N],[A,N] \big]
\subseteq [A,N]^2,
\]
and it follows that
\[
\widetilde{A}N\widetilde{A}
\stackrel{\eqref{prp:IdealGenByN2:eq1}}{\subseteq} \widetilde{A}[A,N]
\stackrel{\eqref{prp:IdealGenByN2:eq3}}{\subseteq} [A,N] + [A,N]^2
\subseteq [A,N]^2.
\]
We have verified that $ANA = \widetilde{A}N\widetilde{A} = [A,N]^2$. 

Given $a,x,y \in A$, we have $[a,xy] = [ax,y] + [ya,x]$, and thus
\[
\big[ A,[A,N] \big]
\subseteq \big[ A,[A,N]^2 \big]
\subseteq \big[ A[A,N],[A,N] \big] + \big[ [A,N]A,[A,N] \big].
\]
Since $[A,N]^2=ANA$ is an ideal, we have $A[A,N],[A,N]A \subseteq [A,N]^2 = ANA$, and thus $[A,[A,N]] \subseteq [ANA, [A,N]]$.
The converse of this inclusion is clear.

Since $[A,N] \subseteq ANA$, we have $[ANA, [A,N]] \subseteq [ANA, ANA]$.
Conversely, using again that $[a,xy] = [ax,y] + [ya,x]$ for $a,x,y \in A$, we have 
\[
[ANA,ANA]
= \big[ [A,N]^2,[A,N]^2 \big]
\subseteq \big[ [A,N]^3,[A,N] \big]
\subseteq \big[ ANA,[A,N] \big],
\]
as desired.
\end{proof}

\begin{qst}
\label{qst:NilN2}
Is every nilpotent element in a \ca{} a finite sum of square-zero elements?
\end{qst}

Robert showed in \cite[Lemma~2.1]{Rob16LieIdeals} that every nilpotent element in a \ca{} $A$ belongs to $[A,A]$.
Therefore, a positive answer to \autoref{qst:Robert} entails a positive answer to \autoref{qst:NilN2}.

We also recover the following result, which is implicit in \cite{Rob16LieIdeals}.

\begin{cor}
\label{prp:ClosureHigherCommutators}
Let $A$ be a \ca{}.
Then $\overline{[[A,A],[A,A]]} = \overline{[A,A]}$.
\end{cor}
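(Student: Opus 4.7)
The plan is to derive this closure identity as a direct corollary of \autoref{prp:N}, combined with the known fact that $[A,A]$ and $N$ have the same norm-closure (\cite[Proposition~2.2]{AlaExtVilBreSpe16CommutatorsSquareZero}, \cite[Corollary~2.3]{Rob16LieIdeals}), which is already cited in the introduction.

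First I would record the trivial inclusion $\overline{[[A,A],[A,A]]} \subseteq \overline{[A,A]}$, which follows since $[[A,A],[A,A]] \subseteq [A,A]$. The content is therefore in the reverse inclusion, which I would obtain by sandwiching $N$ between the two subspaces. Concretely, every square-zero element lies in $[A,A]$, so $N \subseteq [A,A]$, and hence
\[
N = [N,N] \subseteq \big[[A,A],[A,A]\big],
\]
where the first equality is \autoref{prp:N}. Taking norm-closures on both sides gives $\overline{N} \subseteq \overline{[[A,A],[A,A]]}$.

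Finally, I would combine this with the identity $\overline{[A,A]} = \overline{N}$ to conclude
\[
\overline{[A,A]} = \overline{N} \subseteq \overline{\big[[A,A],[A,A]\big]} \subseteq \overline{[A,A]},
\]
forcing equality throughout. There is no real obstacle here: the only nontrivial inputs are \autoref{prp:N} (already proved) and the closure equality $\overline{[A,A]}=\overline{N}$ (already in the literature), so the corollary amounts to a short bookkeeping argument chaining these two facts.
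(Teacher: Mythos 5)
Your proof is correct and uses exactly the same two ingredients as the paper's: \autoref{prp:N} and the closure identity $\overline{N}=\overline{[A,A]}$. If anything, your arrangement via the one-sided inclusion $N=[N,N]\subseteq[[A,A],[A,A]]$ is slightly cleaner than the paper's chain of closure equalities, since it avoids having to invoke continuity of the commutator to justify $\overline{[[A,A],[A,A]]}=\overline{[N,N]}$.
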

\begin{proof}
Let $N$ denote the subspace generated by the square-zero elements in~$A$.
We have $\overline{N} = \overline{[A,A]}$;
see \cite[Proposition~2.2]{AlaExtVilBreSpe16CommutatorsSquareZero}, \cite[Corollary~2.3]{Rob16LieIdeals}.
Using this at the first and last step, and using \autoref{prp:N} at the second step, we get
\[
\overline{\big[ [A,A],[A,A] \big]} 
= \overline{[N,N]} 
= \overline{N} 
= \overline{[A,A]},
\]
as desired.
\end{proof}


\providecommand{\etalchar}[1]{$^{#1}$}
\providecommand{\bysame}{\leavevmode\hbox to3em{\hrulefill}\thinspace}
\providecommand{\noopsort}[1]{}
\providecommand{\mr}[1]{\href{http://www.ams.org/mathscinet-getitem?mr=#1}{MR~#1}}
\providecommand{\zbl}[1]{\href{http://www.zentralblatt-math.org/zmath/en/search/?q=an:#1}{Zbl~#1}}
\providecommand{\jfm}[1]{\href{http://www.emis.de/cgi-bin/JFM-item?#1}{JFM~#1}}
\providecommand{\arxiv}[1]{\href{http://www.arxiv.org/abs/#1}{arXiv~#1}}
\providecommand{\doi}[1]{\url{http://dx.doi.org/#1}}
\providecommand{\MR}{\relax\ifhmode\unskip\space\fi MR }
\providecommand{\MRhref}[2]{%
  \href{http://www.ams.org/mathscinet-getitem?mr=#1}{#2}
}
\providecommand{\href}[2]{#2}

\end{document}